\newtheorem{theorem}{Theorem}[section]
\newtheorem{proposition}[theorem]{Proposition}
\theoremstyle{definition}
\newtheorem{definition}[theorem]{Definition}
\theoremstyle{remark}
\newtheorem{remark}[theorem]{Remark}
\newcommand{\hpart}{h^{1,q}}
\newcommand{\Fpart}{F^{1,q}}
\newcommand{\Zpart}{Z^{1,q}}
\newcommand{\Spart}{S^{1,q}}
\newcommand{\hspin}{h^{r,1}}
\newcommand{\Fspin}{F^{r,1}}
\newcommand{\Zspin}{Z^{r,1}}
\newcommand{\Sspin}{S^{r,1}}
\newcommand{\hmix}{h^{r,q}}
\newcommand{\Zmix}{Z^{r,q}}
\newcommand{\Smix}{S^{r,q}}
\newcommand{\la}{\left\langle}
\newcommand{\ra}{\right\rangle}
\newcommand{\E}{\mathcal{E}}
\newcommand{\C}{\mathbb{C}}
\newcommand{\xdiff}{\frac{\mathrm{d}}{\mathrm{d}x}}
\newcommand{\Z}{\mathbb{Z}}
\renewcommand{\la}{\left\langle}
\renewcommand{\ra}{\right\rangle}
\title[Schr\" odinger equation of Hurwitz numbers]{The spectral curve and the Schr\" odinger equation of double Hurwitz numbers and higher spin structures}
\author{M.~Mulase}
\thanks{M.~M. is supported by the 
NSF grant DMS-1104734. S.~S. and L.~S. are suppoted by a VIDI grant of NWO}
\address{M.M.: Department of Mathematics, University of California, Davis, CA 95616–8633, U.S.A.}
\email{mulase@math.ucdavis.edu}
\author{S.~Shadrin}
\author{L.~Spitz}
\address{S.S. and L.S.: Korteweg-de Vries Institute for Mathematics, University of Amsterdam, Postbus 94248, 1090 GE Amsterdam, The Netherlands}
\email{s.shadrin@uva.nl, l.spitz@uva.nl}
\begin{document}

\begin{abstract} We derive the spectral curves for $q$-part double Hurwitz numbers, $r$-spin simple Hurwitz numbers, and  arbitrary combinations 
of these cases, from the analysis of the 
unstable $(0,1)$-geometry. We quantize this family of spectral curves and obtain the Schr\"odinger equations for the partition function of the corresponding Hurwitz problems. We thus confirm the conjecture for the existence of
\emph{quantum curves} 
in these generalized Hurwitz number cases.
\end{abstract}

\maketitle


\section{Introduction and the main results}

The purpose of this paper is to 
rigorously solve the conjecture
of  
\cite{ADKMV, DHS, DHSV, DV2007,GuSu}
 for the existence
of the \emph{quantum curves} 
for three series of 
infinitely many cases of generalized Hurwitz  numbers. 
The semi-classical limit of these quantum curves
recovers the spectral curves of the Eynard-Orantin
integral recursion for each of these cases.
Our main results, the concrete 
formulas for the spectral curves
and their quantization, are presented in Table 1 and Table 2
below.

\subsection{Hurwitz numbers and Eynard-Orantin recursion}
Simple
Hurwitz numbers $h_{g,\mu}$ enumerate 
genus $g$ ramified covering of $\mathbb{P}^1$, with one special fiber over infinity, where the cyclic type of the monodromy is given by the sequence $\mu=(\mu_1,\dots,\mu_\ell)$, and with $m:=2g-2+\ell+\sum_{i=1}^\ell \mu_i$ simple critical points. 

Hurwitz numbers play an important role in 
various areas of mathematics, such 
as  combinatorics and representation theory 
of  symmetric groups, integrable systems, and Hodge integrals over  the moduli spaces of curves. 
One of the recent exciting developments
 on Hurwitz 
numbers is the discovery of their relation to  random matrix theory and 
related fields, in particular, to the Eynard-Orantin   integral recursion formalism. 

The Eynard-Orantin recursion~\cite{EO} is 
an effective algorithm to  calculate various quantum 
 invariants, such as closed and
open Gromov-Witten invariants of toric target
spaces and certain Hurwitz numbers.
The formula calculates these invariants 
from rather small input data, that consist of
only a plane algebraic or analytic curve, 
called the \emph{spectral curve} of the
theory, and 
 Riemann's normalized fundamental 
differential form of the  second kind defined on 
the spectral curve.

We learn from  various physics literature 
\cite{ADKMV, DHS, DHSV, DV2007,GuSu}
that when the spectral curve has genus $0$,  
it is conjectured that the following holds.
\begin{itemize}

\item There exists a unique procedure to calculate
the canonical primitive functions of the 
symmetric differential forms that are 
obtained by the Eynard-Orantin 
integral recursion.

\item The  \emph{partition function} of the theory, 
which is the exponential generating function
of the \emph{principal specialization} of 
these primitive functions,  
satisfies a holonomic system generated by 
a single stationary Schr\"odinger operator.

\item Moreover, the total symbol of the holonomic
system defines a Lagrangian subvariety
immersed into 
the cotangent bundle of $\mathbb{C}^*$, which is 
exactly the same as the realization of the 
spectral curve as a plane curve.

\item In other words, the spectral curve and
its immersion 
as a Lagrangian into the cotangent bundle are recovered
from the semi-classical limit of the 
Schr\"odinger equation.
\end{itemize}

In the physics literature cited above, this
Schr\"odinger operator is called 
a \emph{quantum curve}. It is the
Weyl quantization of the 
defining equation of the spectral curve
in the cotangent bungle.
A mathematical proof of  
this conjecture for a few simple cases have 
been established in \cite{MS}.

The generating functions of simple 
Hurwitz numbers satisfy the Ey\-nard-Orantin
integral recursion with the Lambert curve
$x=ye^{-y}$ as the spectral curve in the
$xy$-plane. It was originally conjectured 
by Bouchard and Mari\~no \cite{BM}, and 
mathematically proved in many different ways in 
\cite{BEMS,EMS09,Eyn}. In~\cite{Zhou2} Zhou showed the existence of the quantum
curve for the 
case of simple Hurwitz numbers, quantizing in a proper way the equation of the Lambert curve (see also~\cite{MS}).

\subsection{Generalizations of Hurwitz numbers}

In this paper we consider two different generalizations of the usual simple Hurwitz numbers. One of them is the double Hurwitz numbers that count the ramified coverings of $\mathbb{P}^1$ with two special fibers. One of which has an arbitrary fixed cyclic type of monodromy $\mu=(\mu_1,\dots,\mu_\ell)$, and the other  has the cyclic type of monodromy equal to $(q,q,\dots,q)$. All other critical points are 
assumed to be simple. This
type of Hurwitz numbers we call 
\emph{$q$-double Hurwitz numbers}
 and denote by $\hpart_{g,\mu}$. There is a closed formula for these numbers in terms of the so-called Hurwitz-Hodge integrals, see~\cite{JPT}. For $q=1$ we recover the usual 
simple Hurwitz numbers.

Another generalization of simple 
Hurwitz numbers is the 
so-called \emph{$r$-spin Hurwitz numbers},
 denoted by $\hspin_{g,\mu}$. In this case we can intuitively think that we have  an arbitrary fixed cyclic type of monodromy $\mu$ at
 a special fiber, but all other ramifications are \emph{completed} $(r+1)$-cycles, instead of the usual simple critical points. These 
 completed cycles can be naturally defined as certain special elements of the center of the group algebra of the symmetric group~\cite{KerOls}.  They also play
a  key role in the Gromov-Witten theory of 
$\mathbb{P}^1$  \cite{OkoPan06}. There is a closed formula for these numbers in terms of the intersection theory of the moduli space of $r$-spin structures conjectured by Zvonkine~\cite{Zvo} and proved in~\cite{ShaSpiZvo2}. The geometric and algebraic definitions of these numbers are discussed in detail in~\cite{ShaSpiZvo}, though in this paper we use a slightly different normalization.

Finally we shall consider the mixed case of the
above two generalizations. Geometrically, this 
is the case of two special fibers, where one has an arbitrary fixed monodromy, the other
 has the cyclic type of  $(q,q,\dots,q)$, and all other ramifications are the completed $(r+1)$-cycles. We call these numbers 
\emph{$q$-double $r$-spin Hurwitz numbers},
and denote them by $\hmix_{g,\mu}$.

\subsection{Spectral curves}

If we have a  partition function $Z$ that is the
exponential generating function
of the \emph{free energies}, i.e., if $Z$ has an
expansion of the form
 $Z=\exp\left(\sum_{g=0}^\infty \lambda^{2g-2}\sum_{\ell=1}^\infty F_{g,\ell}\right)$, 
 then a natural question is whether we can produce a spectral curve and the other input data of the Eynard-Orantin recursion procedure so that the  $\ell$-point differential 
 forms $\omega_{g,\ell}$ determined by the recursion would coincide with the
 exterior derivatives $d_1\cdots d_\ell F_{g,\ell}$
 of the free energies.

We do not have a general answer to this question.
If we can find the holonomic system satisfied 
by $Z$, then its semi-classical limit gives 
a spectral curve as a holomorphic 
Lagrangian subvariety. Another 
mechanism
was proposed in~\cite{DMSS12}.
The idea is that the spectral curve can be obtained via the analysis of the $(0,1)$-geometry, that is, 
the spectral curve is the Riemann surface (the maximal
domain of holomorphy) of 
the one variable function $F_{0,1}$. This mechanim
works for many examples, including simple
Hurwitz numbers \cite{DMSS12}.

Note that in both cases, it is not \`a priori clear that the $\ell$-point differential forms produced from the resulting spectral curve will coincide with the exterior derivatives of the free energies; it appears to be the case in many known examples, but has to be proved in each individual case.

We first 
examine the latter idea in the case of 
various generalizations of simple 
Hurwitz numbers described above. This way we obtain the spectral curves in Table~\ref{tab:spectral_curves}.

\begin{table}[h!tb] 
  \centering
  
  \begin{tabular}{|c||c|}

\hline 

$q$-Double Hurwitz Numbers
 & $
x=y^{1/q} e^{-y} 
$
\tabularnewline
\hline 
$r$-Spin Hurwitz Numbers & $
x=ye^{-y^r}
$  
\tabularnewline
\hline 
Mixed $q$-Double $r$-Spin Hurwitz Numbers& 
$x=y^{1/q}e^{-y^r}$  \tabularnewline
\hline 

\end{tabular}
\bigskip

  \caption{Spectral Curves.}\label{tab:spectral_curves}
\end{table}

We note that the
spectral curve for the case of $q$-double Hurwitz numbers was recently proved in~\cite{BHM13,DLN}.
Evidence for the formula for the spectral curve for $r$-spin Hurwitz numbers is given in~\cite{ShaSpiZvo2}. The mixed case is so far still conjectural.

\subsection{Schr\"odinger equations} The
formulas for the spectral curves,
even still conjectural for the most general 
case, give enough input to test the conjecture of 
the existence of the quantum curves, 
or the Schr\"odinger equation for the principal specialization of the 
partition function. We prove it in all three cases mentioned above, generalizing in this way the result of \cite{Zhou2} for  simple Hurwitz numbers.

It is worth mentioning that when we apply Weyl quantization, we need to find the 
correct ordering of the operators. Our 
guiding principle is the straightforward application of
 the semi-infinite wedge product formalism
 of the various Hurwitz numbers. 
 
 The main result of the quantum curves 
 we establish are summarized
 in the following table.
 
 \begin{table}[htb]
  \centering
  
  \begin{tabular}{|c||c|}

\hline 

$q$-Double Hurwitz Numbers &
$
\hat y - \left(e^{\frac{q-1}{2} \hat y} \hat x e^{-\frac{q-1}{2} \hat y} \right)^q e^{q \hat y}
$
\tabularnewline
\hline 
$r$-Spin Hurwitz Numbers & $
\hat y - \hat x^{\frac{3}{2}} \exp\left(\frac{\sum_{i=0}^r \hat x^{-1} \hat y^i \hat x \hat y^{r-i}}{r+1}\right) \hat x^{-\frac{1}{2}}
$  
\tabularnewline
\hline 
Mixed  Hurwitz Numbers& 
$\hat y-\hat x^{q+1/2} e^{\frac{q}{r+1}\sum_{i=0}^r \hat x^{-q} \hat y ^i \hat x^{q} \hat y^{r-1}} \hat x^{-1/2}$  \tabularnewline
\hline 

\end{tabular}
\bigskip

  \caption{Quantum Curves.}
\end{table}

Here the canonical 
quantization of the coordinate
functions $x$ and $y$ are defined by 
$$
\begin{cases}
\hat x = x
\\
\hat y = \lambda x\frac{d}{dx},
\end{cases}
$$
reflecting the nature of the cotangent bundle
$T^*(\mathbb{C}^*)$ and the holomorphic
tautological $1$-form $y d\log x$ on it.

\subsection{Organization of the paper}

In Section 2 we collect the necessary
 background materials of the semi-infinite wedge formalism. After this preparation, in each of the
 following three sections we  
 (a) define a particular generalization of 
 Hurwitz numbers; (b) derive the formula for the principal specialization of their partition function;  
 (c) identify the formula for the spectral curve; and 
 (d) prove the existence of the 
 quantum curve, or the stationary Schr\"odinger equation.
 The $q$-double Hurwitz numbers are studied in
 Section 3,  the $r$-spin Hurwitz numbers
 in Section 4, and  finally in Section 5 we prove the
 results for the mixed case.

\section{Infinite-wedge space}
In this section we sketch the theory of the semi-infinite wedge space. We will use it to express both the $q$-double Hurwitz numbers and the $r$-spin Hurwitz numbers (in fact, in this paper we use these expressions as definitions) and to compute the corresponding spectral curves and their quantizations. Here we give just a quick reminder of these things; we refer to
\cite{Joh10, OkoPan06, ShaSpiZvo}
for more detail.

The infinite wedge space is defined in the following way. Let $V$ be an infinite dimensional vector space with basis labelled by the half integers. Denote by $\underline{i}$ the basis vector labelled by $i$, so $V = \bigoplus_{i \in \Z + \frac{1}{2}} \underline{i}$.

\begin{definition}\label{Def:WedgeProduct} Let $c$ be an integer.
An \emph{infinite wedge product of charge $c$} is a formal expression 
\begin{equation}\label{eq:wedgeProduct}
\underline{i_1} \wedge \underline{i_2} \wedge \cdots
\end{equation}
such that the sequence of half-integers $i_1, i_2, i_3, \dots$ differs from the sequence $c-1/2, c-3/2, c-5/2, \dots$ in only a finite number of places.

The \emph{charged infinite wedge space} is the span of all infinite wedge products. The \emph{infinite wedge space} is its zero charge subspace, that is, the span of all zero charge infinite wedge products.
On both spaces we introduce the inner product $(\cdot,\cdot)$ for which the vectors of the form~(\ref{eq:wedgeProduct}) are orthonormal.
\end{definition}

Note that the infinite wedge space is spanned by the vectors
\begin{equation}
v_\lambda = \underline{\lambda_1 - 1/2} \wedge \underline{\lambda_2 - 3/2} \wedge \underline{\lambda_3 - 5/2} \wedge \dots,
\end{equation}
where $(\lambda_1 \geq \lambda_2 \geq \dots \geq 0 \geq 0 \geq \dots)$ is a partition of any non-negative integer.

The Hurwitz numbers will be expressed as so-called \emph{vacuum expectation values} of some appropriate operators.  

\begin{definition}
The zero charge vector $v_\emptyset = \underline{-\frac{1}{2}} \wedge \underline{-\frac{3}{2}} \wedge \cdots$ is denoted by $|0 \rangle$ and is called the {\em vacuum vector}. Its dual $\langle 0 |$ with respect to the inner product is called the {\em covacuum} vector. If $\mathcal{P}$ is an operator on the infinite wedge space, we define its  \emph{vacuum expectation value} as
$\langle \mathcal{P}\rangle
= \langle 0 |\mathcal{P}|0\rangle$.
\end{definition}

\begin{definition} Let $k$ be any half integer. Then the operator $\Psi_k$ is defined by
$\Psi_k \colon (\underline{i_1} \wedge \underline{i_2} \wedge \cdots) \ \mapsto \ (\underline{k} \wedge \underline{i_1} \wedge \underline{i_2} \wedge \cdots)$. This operator acts on the charged infinite wedge space and increases the charge by $1$.

The operator $\Psi_k^*$ is defined to be the adjoint of the operator $\Psi_k$ with respect to the inner product.

The normally ordered products of $\Psi$-operators are defined in the following way
\begin{equation}
{:}\Psi_i \Psi_j^*{:} \ = \begin{cases}\Psi_i \Psi_j^*, & \text{ if } j > 0 \\
-\Psi_j^* \Psi_i & \text{ if } j < 0\ .\end{cases} 
\end{equation}
\end{definition}

Note that the two expressions are equal unless $i=j$. Also note that the operator~${:}\Psi_i \Psi_j^*{:}$ does not change the charge of an infinite wedge product, and can thus be viewed as an operator on the infinite wedge space.

\begin{definition}
Let $n \in \Z$ be any integer. We define the so-called \emph{$\E$-operators} $\E_n(z)$ and~$\tilde{\E}_n(z)$ depending on a formal variable~$z$ by
\begin{align}
\E_n(z) &= \sum_{k \in \Z + \frac{1}{2}} e^{z(k - \frac{n}{2})} \, {:} \Psi_{k-n} \Psi^*_k {:} \, + \frac{\delta_{n,0}}{e^{z/2} - e^{-z/2}} \\
\tilde{\E}_n(z) &= \sum_{k \in \Z + \frac{1}{2}} e^{z(k - \frac{n}{2})} \, {:} \Psi_{k-n} \Psi^*_k {:} \, .
\end{align}
If $n \neq 0$ we denote by~$\alpha_n$ the operator
\begin{equation}
\alpha_n = \sum_{k \in \Z + \frac{1}{2}} {:} \Psi_{k-n} \Psi^*_k {:} \; .
\end{equation}
\end{definition}

Informally, the operator $\alpha_n$ attempts to add $n$ to every factor of an infinite wedge product and returns the sum of successful attempts.

The vacuum expectation values of a product $\E_{a_1}(z_1) \cdots \E_{a_n}(z_n)$ of these operators with $\sum a_i = 0$ are computed using the following facts. 

\begin{proposition}\label{Prop:Ecommut}
Denote by~$\zeta$ the function 
\begin{equation}
\zeta(z) := e^{z/2} - e^{-z/2} \ . 
\end{equation} 
Then we have
\begin{equation}\label{eq:comm}
[\E_k(w), \E_l(z)] = \zeta(kz-lw) \E_{k+l}(z+w);
\end{equation}
in particular,
\begin{equation}
[\E_k(0) , \E_l(z)] = \zeta(kz) \E_{k+l}(z)
\end{equation}
and, taking a limit as $z \to 0$,
\begin{equation}
[\E_k(0) , \E_l(0)] = k  \delta_{k+l,0}.
\end{equation}
Note that the proposition is still true when we replace any of the $\E$-operators on the left-hand side by the corresponding~$\tilde{\E}$. 
\end{proposition}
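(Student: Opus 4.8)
The strategy is to descend to the fermionic bilinears $E_{ij} := {:}\Psi_i\Psi_j^*{:}$, in terms of which $\tilde\E_a(w) = \sum_j e^{w(j-a/2)}E_{j-a,j}$ (sum over half-integers $j$) and $\E_a(w) = \tilde\E_a(w) + \delta_{a,0}/\zeta(w)$. From the canonical anticommutation relations $\Psi_i\Psi_j^* + \Psi_j^*\Psi_i = \delta_{i,j}$, $\Psi_i\Psi_j + \Psi_j\Psi_i = \Psi_i^*\Psi_j^* + \Psi_j^*\Psi_i^* = 0$ together with the definition of normal ordering one obtains the centrally extended bracket
\begin{equation*}
[E_{ij},E_{kl}] \;=\; \delta_{jk}E_{il} - \delta_{li}E_{kj} + \psi(E_{ij},E_{kl}),
\end{equation*}
where $\psi$ is a scalar cocycle vanishing unless $E_{kl}$ is the transpose of $E_{ij}$, i.e.\ $j=k$ and $i=l$, in which case it equals $\pm 1$ according to the signs of $i$ and $j$ (this is classical; see \cite{OkoPan06, ShaSpiZvo}). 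All of the infinite sums below are harmless: when applied to a basis vector $v_\lambda$ of the infinite wedge space, only finitely many summands act nontrivially.

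First I would isolate the ``regular'' contribution to $[\tilde\E_k(w),\tilde\E_l(z)]$. Expanding the double sum and using the term $\delta_{jk}E_{il}-\delta_{li}E_{kj}$ produces two single sums; after a change of summation index each becomes a multiple of $\tilde\E_{k+l}(z+w)=\sum_m e^{(z+w)(m+(k+l)/2)}E_{m,m+k+l}$, with respective coefficients $e^{\frac12(kz-lw)}$ and $-e^{-\frac12(kz-lw)}$, whose sum is exactly $\zeta(kz-lw)$. Hence the regular part equals $\zeta(kz-lw)\tilde\E_{k+l}(z+w)$. When $k+l\neq 0$ the cocycle cannot contribute, since it requires the two $E$-operators to be mutual transposes and that forces $k+l=0$; as moreover $\E_{k+l}=\tilde\E_{k+l}$ in this case, this already establishes \eqref{eq:comm} for $k+l\neq 0$.

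It remains to treat $l=-k$ with $k\neq 0$ (the case $k=l=0$ being trivial). The regular part again gives $\zeta(k(z+w))\tilde\E_0(z+w)$, and the cocycle now contributes a pure scalar: on its support $E_{j-k,j}$ is paired with its transpose $E_{j,j-k}$, the index $j$ ranging over the $|k|$ half-integers lying strictly between $0$ and $k$, each weighted by $e^{(z+w)(j-k/2)}$ times a $j$-independent sign coming from $\psi$. Since $j-k/2$ then runs over $-(k-1)/2,-(k-3)/2,\dots,(k-1)/2$, this weighted sum collapses (with the correct sign convention for $\psi$) to $\zeta(k(z+w))/\zeta(z+w)$. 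Because $\E_0(z+w)=\tilde\E_0(z+w)+1/\zeta(z+w)$, this scalar is precisely what promotes the right-hand side to $\zeta(k(z+w))\E_0(z+w)=\zeta(kz-lw)\E_{k+l}(z+w)$ — and this is exactly the purpose of the $\delta_{n,0}/\zeta(z)$ term in the definition of $\E_n(z)$. The two displayed special cases follow immediately: setting $w=0$ in \eqref{eq:comm} gives $[\E_k(0),\E_l(z)]=\zeta(kz)\E_{k+l}(z)$, and letting $z\to 0$, using $\zeta(z)=z+O(z^3)$ so that $\zeta(kz)\to 0$ while $\zeta(kz)/\zeta(z)\to k$ (the latter relevant only when $k+l=0$, where $\E_0(z)$ carries the pole $1/\zeta(z)$), gives $[\E_k(0),\E_l(0)]=k\,\delta_{k+l,0}$. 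Finally, since $\E_n-\tilde\E_n$ is a scalar, replacing any $\E$ on the left-hand side by the corresponding $\tilde\E$ does not change the bracket, which is the last assertion.

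The main obstacle is the anomaly computation of the previous paragraph: fixing the signs in the cocycle $\psi$ and recognizing that the resulting weighted sum over the half-integers between $0$ and $k$ collapses exactly to $\zeta(k(z+w))/\zeta(z+w)$. Everything else reduces to bookkeeping with geometric progressions, but this is the one step where the normalization $\delta_{n,0}/\zeta(z)$ is forced, and where the bracket genuinely closes on the operators $\E_n$ rather than merely on the $\tilde\E_n$.
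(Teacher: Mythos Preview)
Your argument is correct and is precisely the standard derivation: reduce to the centrally extended $\mathfrak{gl}_\infty$ bracket on the fermionic bilinears $E_{ij}$, read off the regular part as a pair of shifted copies of $\tilde\E_{k+l}(z+w)$ whose coefficients combine to $\zeta(kz-lw)$, and then verify that for $k+l=0$ the Japanese cocycle produces exactly the geometric sum $\zeta(k(z+w))/\zeta(z+w)$ needed to upgrade $\tilde\E_0$ to $\E_0$. Your identification of the support of the cocycle (the $|k|$ half-integers strictly between $0$ and $k$) and the resulting symmetric sum over $j-k/2\in\{-(k-1)/2,\dots,(k-1)/2\}$ is accurate, and the two limiting special cases and the final remark about $\tilde\E$ versus $\E$ follow as you say.

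There is nothing to compare against in the paper itself: Proposition~\ref{Prop:Ecommut} is stated there without proof, as part of the background material for which the reader is referred to \cite{Joh10,OkoPan06,ShaSpiZvo}. Your write-up is essentially the argument one finds in those references (in particular \cite{OkoPan06}), so it is entirely appropriate here. The only cosmetic point is that you invoke the cocycle formula for $[E_{ij},E_{kl}]$ by citation rather than deriving it from the CAR and the normal-ordering convention; since the paper itself treats all of this as known, that is consistent with its level of detail.
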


\begin{proposition}\label{Prop:zero}
We have $\E_n(z)|0\rangle = 0 $ for $n>0$, and $\langle 0|\E_n(z) = 0$ for $n<0$. We also have $\tilde{\E}_0(z)|0\rangle = 0$ and $\langle 0|\tilde{\E}_0(z) = 0$ . 
\end{proposition}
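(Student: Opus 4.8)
The plan is to reduce the whole proposition to two elementary facts about how the fermionic operators act on the vacuum, and then run a short, complete case check. First I would record that $|0\rangle = \underline{-1/2}\wedge\underline{-3/2}\wedge\cdots$ is exactly the wedge of all basis vectors $\underline{m}$ with $m<0$. Consequently $\Psi_k|0\rangle = 0$ for every $k<0$ (the factor $\underline{k}$ is already present, so wedging it in again kills the product) and $\Psi_k^*|0\rangle = 0$ for every $k>0$ (there is no factor $\underline{k}$ for the adjoint operator to remove). Taking adjoints gives the dual statements $\langle 0|\Psi_k = 0$ for $k>0$ and $\langle 0|\Psi_k^* = 0$ for $k<0$. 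I would stress that, since $k$ runs over $\Z+1/2$, the cases $k>0$ and $k<0$ are mutually exclusive and exhaustive, so this is a complete case split rather than a partial one.

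Next I would substitute the definition of $\E_n(z)$ and inspect a single summand $e^{z(k-n/2)}\,{:}\Psi_{k-n}\Psi_k^*{:}\,|0\rangle$ for fixed $n>0$. The point is that the normal-ordering convention always places, when acting on $|0\rangle$, the operator that ``acts first'' in precisely the position where one of the two vanishing facts applies: if $k>0$ then ${:}\Psi_{k-n}\Psi_k^*{:}=\Psi_{k-n}\Psi_k^*$ and $\Psi_k^*|0\rangle=0$; if $k<0$ then ${:}\Psi_{k-n}\Psi_k^*{:}=-\Psi_k^*\Psi_{k-n}$ and, since $n\ge 1$ forces $k-n<0$, we get $\Psi_{k-n}|0\rangle=0$. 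Hence every summand annihilates $|0\rangle$. Because $\delta_{n,0}=0$ for $n>0$, the scalar correction term is absent, so $\E_n(z)|0\rangle=0$, and the identical computation with that term dropped gives $\tilde{\E}_n(z)|0\rangle=0$ too.

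For the covacuum statement with $n<0$ I would run the mirror argument on $\langle 0|$: if $k>0$ then $k-n>0$ and the leftmost factor of $\Psi_{k-n}\Psi_k^*$ satisfies $\langle 0|\Psi_{k-n}=0$; if $k<0$ then ${:}\Psi_{k-n}\Psi_k^*{:}=-\Psi_k^*\Psi_{k-n}$ with $\langle 0|\Psi_k^*=0$. (One could also get this line by taking adjoints, noting $\E_n(z)^*=\tilde{\E}_{-n}(-z)$ for $n\ne 0$, but the direct check is no longer.) Finally $\tilde{\E}_0(z)$ is just the case $n=0$: the summand $e^{zk}\,{:}\Psi_k\Psi_k^*{:}$ kills $|0\rangle$ because $\Psi_k^*|0\rangle=0$ for $k>0$ and $\Psi_k|0\rangle=0$ for $k<0$, and symmetrically $\langle 0|$ is killed, giving $\tilde{\E}_0(z)|0\rangle=0$ and $\langle 0|\tilde{\E}_0(z)=0$; the only thing distinguishing $\E_0(z)$ from $\tilde{\E}_0(z)$ is the scalar $1/(e^{z/2}-e^{-z/2})$, which is exactly why the assertion is stated for $\tilde{\E}_0$ and not $\E_0$. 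I do not expect a genuine obstacle here --- this is a warm-up lemma --- and the only place that requires attention is matching the two branches of the normal-ordering rule to the two vanishing facts for $\Psi$ and $\Psi^*$; getting that pairing backwards is the one way the argument could go wrong.
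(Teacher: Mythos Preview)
Your argument is correct in every detail: the case split on the sign of $k$ matches the two branches of the normal-ordering rule to the two vanishing facts $\Psi_k^*|0\rangle=0$ for $k>0$ and $\Psi_k|0\rangle=0$ for $k<0$, and the inequality $k-n<0$ (resp.\ $k-n>0$) when $n>0$, $k<0$ (resp.\ $n<0$, $k>0$) is exactly what makes the second branch work. The paper itself states this proposition as a standard background fact without proof, so there is no argument to compare against; your write-up is the kind of verification the reader is implicitly expected to supply, and it is complete as written.
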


The vacuum expectation value of a product of $\E$-operators is now computed by commuting the operators~$\E_n(z)$ with negative~$n$ to the left using Proposition~\ref{Prop:Ecommut}. Repeating this will lead to either a zero contribution by Proposition~\ref{Prop:zero}, or some product of operators of the form~$\E_0(z)$. The last can be calculated using 
\begin{equation}
\E_0(z) |0\rangle = \frac{1}{\zeta(z)} |0\rangle .
\end{equation}

At the end of the calculation, there can be either one or more $\E$-operators in the vacuum expectation value. If there is one, this is called a \emph{connected contribution} to the vacuum expectation value, otherwise it is a \emph{disconnected contribution}. It turns out that the sum of connected contributions in well-defined (does not depend on the order in which we have computed the commutators); it is called the \emph{connected vacuum expectation value} and denoted by~$\langle \cdot \rangle^\circ$ (adding a super-script circle to the full vacuum expectation value).

\section{$q$-Double Hurwitz numbers} \label{sec:rpart}
In this section, we study \emph{$q$-double Hurwitz numbers}. Their geometric definition, mentioned in the Introduction, is equivalent~(\cite{Oko,Joh10}) to the following one in terms of vacuum expectation values in the infinite wedge space.

\begin{definition} We define the (connected) $q$-double Hurwitz numbers as
\begin{equation}
\hpart_{g;\mu} := [w_1^2 \cdots w_m^2] \la \prod_{i=1}^{l(\mu)} \frac{\alpha_{\mu_i}}{\mu_i} \cdot \prod_{j=1}^m \tilde{\E}_0(w_j) \cdot \frac{(\alpha_{-q})^s}{q^s\cdot s!} \ra^\circ ,
\end{equation}
where $[w_1^{d_1} \cdots w_n^{d_n}]$ denotes the coefficient of the monomial~$w_1^{d_1} \cdots w_n^{d_n}$ in the power series that follows it. Note that~$s = |\mu|/q$ is an integer since~$|\mu|$ is the degree of the covering, and~$m$ is the number of simple ramification points away from~$0$ and~$\infty$; it is given by the Riemann-Hurwitz formula:
\begin{equation}
m = 2g -2 +l(\mu) + s \; .
\end{equation}
\end{definition}

Note that the Hurwitz numbers defined here differ slightly from those in~\cite{Joh10} in that we do not remember the ordering of the branch points over~$\infty$, reflected in the factor~$1/s!$. Write
\begin{equation}
\Fpart_{g,\ell}(p_1, p_2, \ldots) := \sum_{\mu \colon l(\mu) = \ell} \frac{\hpart_{g;\mu}}{m!}\; p_{\mu_1} \cdots p_{\mu_n} 
\end{equation}
for the generating series of genus~$g$, $q$-double Hurwitz numbers whose partition $\mu$ has $\ell$ parts. The full generating series is given by
\begin{align}
&\log\Zpart(p_1, p_2, \ldots; \lambda) :=  
\sum_{g,\ell} \Fpart_{g,\ell}(p_1, p_2, \ldots) 
\lambda^{2g - 2 + \ell}
\\ \notag
&= \sum_{g,\mu} \frac{\hpart_{g;\mu}}{m!}\; \lambda^{2g-2+l(\mu)} p_{\mu_1} \cdots p_{\mu_{l(\mu)}}
\\ \notag
&=  \la \exp\left(
\sum_{i=1}^\infty \frac{\alpha_i p_i}{i\lambda^{i/q}}\right)
 \exp\left([w^2]\tilde{\E}_0(w)\lambda\right)\exp\left(\frac{\alpha_{-q}}{q}\right)\ra^\circ.
\end{align}

\subsection{Spectral curve from $(0,1)$ geometry}
To find an equation for the spectral curve, we compute the $(g,n) = (0,1)$ part of the generating function
\begin{equation}
\Fpart_{0,1} (\mathbf{p})= [w_1^2 \cdots w_{n-1}^2] \sum_{n=1}^\infty p_{nq} \la \frac{\alpha_{nq}}{nq} \cdot \prod_{i=1}^{n-1} \frac{\tilde{\E}_0(w_i)}{(n-1)!} \cdot \frac{\alpha_{-q}^n}{q^n n!}  \ra^\circ \; .
\end{equation} 
Using the commutation relations~\eqref{eq:comm} to commute the operator~$\alpha_{nq}$ to the right, we obtain:
\begin{equation}
\Fpart_{0,1}(\mathbf{p}) = \sum_{n=1}^\infty \frac{(nq)^{n-2}}{n!} p_{nq} . 
\end{equation}

We will abuse notation and write $\Fpart_{0,1}(x) = \Fpart_{0,1}(\mathbf{p})|_{p_i \mapsto x^i}$ for the principal specialization of~$\Fpart_{0,1}$. 

\begin{remark}\label{re:spectralCurve}
Suppose the generating function for these Hurwitz numbers comes from a spectral curve in~$\C^2$. Denote by~$x$ and~$y$ the coordinates on the two copies of~$\C$. Then by the topological recursion theory, the one-form $\omega_{0,1}(x) = \mathrm{d}\Fpart_{0,1}(x)$ should be equal to~$y(x)\mathrm{d}x$. Sometimes, it will be more natural to think of the spectral curve as living in~$\C^* \times \C$ or in~$(\C^*)^2$. In that case $\omega_{0,1}(x)$ should be equal to $y(x)\frac{\mathrm{d}x}{x}$ or~$\log(y) \frac{\mathrm{d}x}{x}$ respectively. 
\end{remark}

We define an auxiliary function. Let~$W$ be the main branch of the Lambert function~\cite{Knuth}. It has a power-series expansion around zero with radius of convergence of~$1/e$ given by
\begin{equation}
W(z) = - \sum_{n=1}^\infty \frac{n^{n-1}}{n!} (-z)^n .
\end{equation}
and has the property that 
\begin{equation}\label{eq:Wequation}
W(z) e^{W(z)} = z .
\end{equation}

Using this definition, we have
\begin{equation}\label{eq:omega01}
\quad \omega_{0,1}(x) = \mathrm{d}\Fpart_{0,1}(x) 
 = \frac{1}{q} \sum_{n=1}^\infty \frac{n^{n-1}}{n!} (qx^q)^n \frac{\mathrm{d}x}{x} 
= -\frac{1}{q} W(-q x^q) \frac{\mathrm{d}x}{x}\ , \end{equation}
where the last equality is true as long as $|x| \leq (qe)^{-1/q}$.

Therefore, Remark~\ref{re:spectralCurve} leads us to think of the spectral curve $\Spart$ as living in $\C^* \times \C$, given by the equation 
\begin{equation}
\Spart\colon \ y = -\frac{1}{q}W(-qx^q) .
\end{equation}
which can be rewritten to get
\begin{equation}
-qx^q = -qy e^{-qy} \quad \Leftrightarrow \quad x = y^{1/q} e^{-y} . 
\end{equation}

\subsection{Principal specialization}
 Here we once again abuse notation and write 
\begin{equation}
\Zpart(x;\lambda) := \Zpart(\mathbf{p};\lambda)|_{p_i \mapsto x^i}  
\end{equation}
for the principal specialization of~$\Zpart$.

Let~$s_{\sigma}(\mathbf{p})$ be the Schur function corresponding to a partition~$\sigma$, which is given as the following vacuum expectation value in the infinite wedge space
\begin{equation}
s_{\sigma}(\mathbf{p}) := \la 0 \left| \ \exp\left(\sum_{i=0}^\infty \frac{\alpha_i p_i}{i}\right)\ \right| v_{\sigma} \ra . 
\end{equation} 
It is a standard fact in the theory of Schur functions that its principal specialization is given by \begin{equation}
s_{\sigma}(\mathbf{p})|_{p_i \mapsto x^i} = 
	\begin{cases}
		x^l &\text{ if } \sigma = (l, 0, \ldots) \text{ for some } l \\
		0 &\text{ otherwise . } 
	\end{cases}
\end{equation}
Using this it is easy to see that the principal specialization of~$\Zpart$ is given by
\begin{equation}
\Zpart(x;\lambda) = \sum_{i=0}^\infty \frac{x^{iq}}{i! (\lambda q)^i} \exp\left(\lambda\frac{ (iq-\frac{1}{2})^2 - (-\frac{1}{2})^2}{2}\right) .
\end{equation}

To find an operator that annihilates this power-series, we proceed as follows. Denote the $i$th summand in~$\Zpart(x;\lambda)$ by~$a_i$:
\begin{equation}
a_i := \frac{x^{iq}}{i! (\lambda q)^i} \exp\left(\lambda\frac{ (iq-\frac{1}{2})^2 - (-\frac{1}{2})^2}{2}\right) .
\end{equation}
Then 
\begin{equation}
\frac{a_{i+1}}{a_i} = \frac{x^q}{(i+1)\lambda q} e^{\lambda\left(iq^2 + \frac{q(q-1)}{2}\right)}, 
\end{equation}
which implies that the coefficients of~$\Zpart(x;\lambda)$ are related by
\begin{equation}
\lambda q(i+1) a_{i+1} = \left( x e^{\lambda \frac{q-1}{2}}\right)^q e^{\lambda i q^2} a_i .
\end{equation}

In terms of operators, this can be rewritten as
\begin{equation}\label{eq:spectralAnnihilate}
\lambda x \frac{\mathrm{d}}{\mathrm{d}x} a_{i+1} - \left( x e^{\lambda \frac{q-1}{2}}\right)^q e^{q \lambda x\frac{\mathrm{d}}{\mathrm{d}x}} a_i = 0,
\end{equation}
which implies that the operator
\begin{equation}
\lambda x \frac{\mathrm{d}}{\mathrm{d}x} - \left( x e^{\lambda \frac{q-1}{2}}\right)^q e^{q \lambda x\frac{\mathrm{d}}{\mathrm{d}x}}
\end{equation}
annihilates~$\Zpart(x;\lambda)$.

\subsection{Quantization} 
We show that the operator that annihilates the principal specialization of~$\Zpart$ can be obtained as a quantization of the equation of the spectral curve~$\Spart$.

The spectral curve $\Spart$ is defined in $\C^* \times \C$, where the symplectic form is $\lambda \mathrm{d}(\log(x)) \wedge \mathrm{d} y$, so we have the following rules of quantization:
\begin{equation} \label{eq:quant}
	\begin{cases}
		&\hat{x} = x\cdot \\
		&\hat{y} = \lambda \frac{\mathrm{d}}{\mathrm{d}(\log(x))}=\lambda x \frac{\mathrm{d}}{\mathrm{d}x}	
	\end{cases} .
\end{equation}

In order to have the right ordering, we rewrite the equation for $\Spart$ as follows:
\begin{equation}\label{eq:spectralAlternative}
\Spart \colon \ y - \left(e^{\frac{q-1}{2} y} x e^{-\frac{q-1}{2} y} \right)^q e^{qy} = 0 . 
\end{equation}

\begin{theorem} Quantization of the equation of $\Spart$ in this form annihilates $\Zpart(x,\lambda)$.
\end{theorem}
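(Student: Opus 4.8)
The plan is to carry out the Weyl quantization \eqref{eq:quant} on the ordered expression \eqref{eq:spectralAlternative} and to check that the resulting operator is literally the operator
\[
\lambda x \xdiff - \left( x e^{\lambda \frac{q-1}{2}}\right)^q e^{q \lambda x\xdiff}
\]
that was shown in and just after \eqref{eq:spectralAnnihilate} to annihilate $\Zpart(x;\lambda)$. Thus the only content left to verify is the operator identity
\[
\hat y - \left(e^{\frac{q-1}{2} \hat y} \hat x e^{-\frac{q-1}{2} \hat y} \right)^q e^{q \hat y} = \lambda x \xdiff - \left( x e^{\lambda \frac{q-1}{2}}\right)^q e^{q \lambda x\xdiff},
\]
with $\hat x = x\cdot$ and $\hat y = \lambda x\,\xdiff$, after which the theorem is immediate.

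First I would record the basic fact that the one-parameter group generated by $\hat y$ is the scaling operator, $\bigl(e^{t\hat y}f\bigr)(x) = f(e^{\lambda t}x)$. This is immediate on monomials, since $x\,\xdiff\, x^k = k x^k$, so $e^{t\hat y}x^k = e^{\lambda t k}x^k = (e^{\lambda t}x)^k$; in particular $e^{q\hat y} = e^{q \lambda x\,\xdiff}$, which identifies the last factor. Next I would compute the conjugation $e^{t\hat y}\hat x\, e^{-t\hat y}$ by applying it to $x^k$: this gives $e^{t\hat y}\hat x\,(e^{-\lambda t k}x^k) = e^{t\hat y}(e^{-\lambda t k}x^{k+1}) = e^{\lambda t(k+1)}e^{-\lambda t k}x^{k+1} = e^{\lambda t}\,x\cdot x^k$, so $e^{t\hat y}\hat x\,e^{-t\hat y}$ is multiplication by $e^{\lambda t}x$. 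Taking $t = \tfrac{q-1}{2}$ and raising to the $q$-th power yields multiplication by $\bigl(e^{\lambda(q-1)/2}x\bigr)^q = \bigl(x e^{\lambda(q-1)/2}\bigr)^q$, which is the middle factor. Substituting these two identifications into the quantized expression turns it into the operator of \eqref{eq:spectralAnnihilate}, and the theorem follows from the computation carried out there.

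The point of the argument is the bookkeeping of the ordering rather than any analytic difficulty. All operators involved — multiplication by $x$, the shift $e^{q \lambda x\,\xdiff}$, and their products — are well defined term-by-term on power series in $x$, with $e^{q \lambda x\,\xdiff}$ sending $x^{iq}\mapsto e^{iq^2\lambda}x^{iq}$, so there are no convergence issues to worry about. The only subtlety, and the thing one must get right, is that the non-commutative factor $e^{\frac{q-1}{2}\hat y}\hat x e^{-\frac{q-1}{2}\hat y}$ has been arranged precisely so that the conjugation produces exactly the shift by $e^{\lambda(q-1)/2}$ that shows up in the coefficient recursion $\lambda q(i+1)a_{i+1} = \bigl(x e^{\lambda(q-1)/2}\bigr)^q e^{\lambda i q^2} a_i$ for $\Zpart$; this ordering is the one dictated by the semi-infinite wedge formalism, and once \eqref{eq:spectralAlternative} is written in this form there is no remaining obstacle.
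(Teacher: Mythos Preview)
Your proof is correct and follows exactly the same route as the paper: the paper's proof simply asserts that ``direct computation'' gives the operator identity
\[
\hat y - \left(e^{\frac{q-1}{2} \hat y} \hat x e^{-\frac{q-1}{2} \hat y} \right)^q e^{q \hat y}
=
\lambda x \frac{\mathrm{d}}{\mathrm{d}x} - \left( x e^{\lambda \frac{q-1}{2}}\right)^q e^{q \lambda x\frac{\mathrm{d}}{\mathrm{d}x}}
\]
and then invokes the earlier annihilation result, while you supply the explicit verification of that identity via the action on monomials. There is nothing missing or different in approach.
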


\begin{proof} Indeed, direct computation implies that
\begin{equation}
\hat y - \left(e^{\frac{q-1}{2} \hat y} \hat x e^{-\frac{q-1}{2} \hat y} \right)^q e^{q \hat y}
=
\lambda x \frac{\mathrm{d}}{\mathrm{d}x} - \left( x e^{\lambda \frac{q-1}{2}}\right)^q e^{q \lambda x\frac{\mathrm{d}}{\mathrm{d}x}},
\end{equation}
and we have seen in the previous section that this operator annihilates $\Zpart(x,\lambda)$.
\end{proof}

We see that $q$-double Hurwitz numbers are an example of a theory obeying a Schr\"{o}dinger-like equation with respect to the quantization of the spectral curve as expected by~\cite{GuSu}, but contrary to the previous known cases~\cite{MS, Zhou1, Zhou2} we have to take a non-trivial ordering of the operators to obtain this result.

\section{$r$-Spin Hurwitz numbers}\label{sec:rspin}
In this section, we look at the $r$-spin single Hurwitz numbers. They can be defined as vacuum expectation values in the infinite wedge space in the following way.

\begin{definition} We define the (connected) $r$-spin Hurwitz numbers as
\begin{equation}
\hspin_{g,\mu} := \la \prod_{i=1}^{l(\mu)} \frac{\alpha_{\mu_i}}{\mu_i} \cdot \left(r! [w^{r+1}] \tilde{\E}_0(w)\right)^m \cdot  \frac{(\alpha_{-1})^{|\mu|}}{|\mu | !} \ra^\circ ,
\end{equation}
where~$m$ is the number of ramification points other than~$0$, which is given by the Riemann-Hurwitz formula:
\begin{equation}
m = \frac{2g - 2 + l(\mu) + |\mu|}{r} .
\end{equation}
\end{definition}

Note that for~$r=1$, this definition reduces to the definition of ordinary single Hurwitz numbers. Note also that there are different conventions on the coefficient of $[w^{r+1}] \tilde{\E}_0(w)$ in different sources; in particular, a different convention is used in~\cite{ShaSpiZvo}.

Similar to to previous section, we denote by $\Fspin_{g,n}(\mathbf{p})$ the generating function for genus~$g$, $r$-spin Hurwitz numbers $\hspin_{g,\mu}$ whose partition $\mu$ has~$\ell$ parts. That is,
\begin{equation}
\Fspin_{g,\ell}(\mathbf{p}) := \sum_{\mu\colon l(\mu) = \ell} \hspin_{g;\mu} p_{\mu_1} \cdots p_{\mu_\ell} . 
\end{equation}
For the full generating function~$\Zspin$ we then have
\begin{align}\label{eq:Zspin}
& \log \Zspin(\mathbf{p}, \lambda) := \sum_{g,\ell} \Fspin_{g,\ell}(\mathbf{p})\lambda^{2g-2+\ell} 
\\ \notag
& = \la \exp\left(\sum_{i=1}^\infty \frac{\alpha_i p_i}{i\lambda^i}\right) \exp\left(r! [w^{r+1}] \tilde{\E}_0(w) \lambda^r\right) \exp(\alpha_{-1})\ra^\circ .
\end{align} 

\subsection{Spectral curve from $(0,1)$ geometry} To find an equation for the spectral curve, we compute the $(g,n) = (0,1)$ part of the generating function. Commuting the operator~$\alpha_d$ responsible for the total ramification over~$0$ in~\eqref{eq:Zspin} to the right, we obtain
\begin{equation}
\Fspin_{0,1}(\mathbf{p}) = \sum_{n = 0}^\infty \frac{(rn+1)^{n-2}}{n!} p_{rn+1} . 
\end{equation}
Applying the principal specialization, this means that
\begin{equation}
\Fspin_{0,1}(x) = \sum_{n = 0}^\infty \frac{(rn+1)^{n-2}}{n!} x^{rn+1} ,
\end{equation}
which leads to
\begin{equation}\label{eq:omega-1-0-r}
\omega_{0,1}(x) = \mathrm{d}\Fspin_{0,1} = \sum_{n = 0}^\infty \frac{(rn+1)^{n-1}}{n!} x^{rn+1} \frac{\mathrm{d} x}{x} . 
\end{equation}

We use the following formula from~\cite{Knuth}:
\begin{equation}
\left(\frac{W(x)}{x}\right)^{\alpha} = \sum_{n=0}^\infty \frac{\alpha(n+\alpha)^{n-1}}{n!}(-x)^n 
\end{equation} 
to express the right hand side of Equation~\eqref{eq:omega-1-0-r} in a more convenient way. That is
\begin{align}
& \sum_{n = 0}^\infty \frac{(rn+1)^{n-1}}{n!} x^{rn+1} 
 = x \sum_{n=0}^\infty \frac{\frac{1}{r}(n+ \frac{1}{r})^{n-1}}{n!}(rx^r)^n
\\ \notag
&
= x \left( \frac{W(-rx^r)}{-rx^r}\right)^{1/r} = \frac{W(-rx^r)^{1/r}}{(-r)^{1/r}}
\end{align}
Thus, by Remark~\ref{re:spectralCurve} we arrive at the following equation for the spectral curve~$\Sspin$ in $\C^* \times \C$:
\begin{equation} \label{eq:SpectralCurve2}
\Sspin \colon \ y = \left(\frac{W(-rx^r)}{-r}\right)^{1/r} \quad \Leftrightarrow \quad x= y e^{-y^r} .
\end{equation}

\subsection{Principal specialization}
Once again we look at the principal specialization of the full generating function
\begin{align}
\quad \quad \Zspin(x;\lambda) & = \Zspin(x;\lambda)|_{p_i \mapsto x^i} \\ \notag
& = \sum_{d=0}\frac{x^d}{\lambda^d d!} \exp\left( \lambda^r \frac{(d-\frac{1}{2})^{r+1} - (-\frac{1}{2})^{r+1}}{r+1}\right).
\end{align}

We define~$a_d$ to be the $d$th summand this expression. The quotient of $a_{d+1}$ and $a_d$ is given by
\begin{equation}
\frac{a_{d+1}}{a_d} = \frac{x}{\lambda(d+1)} \exp\left(\lambda^r\frac{(d+\frac{1}{2})^{r+1} - (d-\frac{1}{2})^{r+1}}{r+1} \right),
\end{equation}
which is equivalent to
\begin{equation}\label{eq:almostQuantized}
(d+1)\lambda a_{d+1} = x \exp\left(\lambda^r\frac{(d+\frac{1}{2})^{r+1} - (d-\frac{1}{2})^{r+1}}{r+1} \right) a_d .
\end{equation}

To get this into a more convenient form to compare later on with quantization, we define an operator
\begin{equation}
\mathcal{A} := x^{\frac{3}{2}} \exp\left(\frac{x^{-1}\sum_{i=0}^r\left(\lambda x \xdiff\right)^i x \left(\lambda x \xdiff\right)^{r-i}}{r+1}\right)x^{-\frac{1}{2}} .
\end{equation}
Observe that
\begin{align}
\mathcal{A} x^n & = \exp\left(\frac{\lambda^r}{r+1} \sum_{i=0}^r (n+\frac{1}{2})^i (n-\frac{1}{2})^{r-i}\right) x^{n+1}\\ \notag
& = \exp\left(\frac{\lambda^r}{r+1} \left((n+\frac{1}{2})^{r+1} - (n-\frac{1}{2})^{r+1}\right)\right) x^{n+1} .
\end{align}
Thus, equation~\eqref{eq:almostQuantized} implies that
\begin{equation}
(\lambda x \xdiff - \mathcal{A}) \Zspin(x;\lambda) = 0. 
\end{equation}

\subsection{Quantization} 
We show that the operator that annihilates the principal specialization of~$\Zspin$ can be obtained as a quantization of the equation of the spectral curve~$\Sspin$.

We can rewrite the equation of the spectral curve~\eqref{eq:SpectralCurve2} as
\begin{equation}\label{eq:sspin}
\Sspin \colon \ y - x^{\frac{3}{2}} \exp\left(\frac{\sum_{i=0}^r x^{-1} y^i x y^{r-i}}{r+1}\right) x^{-\frac{1}{2}} = 0 . 
\end{equation}

\begin{theorem} Quantization of the equation of $\Sspin$ in this form annihilates $\Zspin(x,\lambda)$.
\end{theorem}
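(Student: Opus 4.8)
The plan is to observe that the quantization of~\eqref{eq:sspin} is precisely the operator $\lambda x \xdiff - \mathcal{A}$ from the previous subsection, which we have already shown annihilates $\Zspin(x;\lambda)$. So the whole proof reduces to a bookkeeping check that substituting the canonical quantization $\hat{x} = x\cdot$, $\hat{y} = \lambda x \xdiff$ into~\eqref{eq:sspin} --- keeping the non-commuting factors in the order written there --- reproduces $\mathcal{A}$ on the nose.

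Concretely, I would first quantize the outer factors: $\hat{x}^{3/2}$ and $\hat{x}^{-1/2}$ become the multiplication operators $x^{3/2}$ and $x^{-1/2}$. Then I would quantize the exponent: the $i$-th summand $\hat{x}^{-1}\hat{y}^{i}\hat{x}\hat{y}^{r-i}$ becomes $x^{-1}(\lambda x \xdiff)^{i} x (\lambda x \xdiff)^{r-i}$, and since the prescribed ordering fixes how these operators are composed inside the exponential, the exponential of the sum of symbols goes to the exponential of the sum of these operators. Matching this against the definition of $\mathcal{A}$ shows that the quantized curve equals $\lambda x \xdiff - \mathcal{A}$. Finally I would invoke the identity $\mathcal{A} x^n = \exp\!\big(\tfrac{\lambda^r}{r+1}\sum_{i=0}^r (n+\tfrac12)^i(n-\tfrac12)^{r-i}\big) x^{n+1} = \exp\!\big(\tfrac{\lambda^r}{r+1}((n+\tfrac12)^{r+1}-(n-\tfrac12)^{r+1})\big)x^{n+1}$ established above (using $\sum_{i=0}^r a^i b^{r-i} = (a^{r+1}-b^{r+1})/(a-b)$ with $a = n+\tfrac12$, $b = n-\tfrac12$, so $a-b=1$), which is exactly the content of the coefficient recursion~\eqref{eq:almostQuantized}; hence $(\lambda x \xdiff - \mathcal{A})\Zspin(x;\lambda) = 0$.

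The only real subtlety --- which is not so much an obstacle as a choice that has to be made correctly --- is the operator ordering. Classically $\sum_{i=0}^r x^{-1}y^i x y^{r-i} = (r+1)y^r$ and $x^{3/2}\cdot x^{-1/2} = x$, so any ordering would recover the curve $x = ye^{-y^r}$; but only the symmetric arrangement in~\eqref{eq:sspin} (one $\hat{x}$ sandwiched between $\hat{y}^i$ and $\hat{y}^{r-i}$, conjugated by $\hat{x}^{\pm 1/2}$) quantizes to an operator that actually annihilates $\Zspin$. Pinning down this ordering is where the semi-infinite wedge computation of $\Zspin(x;\lambda)$ and of $\mathcal{A}$ in the previous subsection does the substantive work; once it is in hand, the theorem is the short verification sketched above.
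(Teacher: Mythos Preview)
Your proposal is correct and follows exactly the same approach as the paper: the paper's proof is the one-line observation that applying the quantization rules~\eqref{eq:quant} to the rewritten curve~\eqref{eq:sspin} yields the operator $\lambda x\,\xdiff - \mathcal{A}$, which was already shown to annihilate $\Zspin(x;\lambda)$ in the preceding subsection. Your write-up simply spells out this identification in more detail and adds the (apt) remark about operator ordering, but the substance is identical.
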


\begin{proof} Indeed, apllying the standard quantization~\eqref{eq:quant} to Equation~\eqref{eq:sspin} we obtain the operator $\lambda x \xdiff - \mathcal{A}$.
\end{proof}

\section{Mixed case}
In this section we provide a slight generalization of the previous two sections, were we look at (connected) $r$-spin 
$q$-double Hurwitz numbers~$\hmix_{g,\mu}$. Since the computations are basically the same as in the previous two sections, we just give the main formulas. Note that for $r = 1$ this reduces to the computations of Section~\ref{sec:rpart}, and for $q = 1$ this reduces to those of Section~\ref{sec:rspin}.

These Hurwitz numbers are given as vacuum expectation values by:
\begin{equation}
\hmix_{g,\mu} =
\la
\prod_{i=1}^\ell \frac{\alpha_{\mu_i}}{\mu_i}
\cdot
\left(
r! [z^{r+1}] \tilde\E_0(z)
\right)^m
\cdot
\frac{\left(
\alpha_{-q}
\right)^s}{q^s s!} 
\ra^\circ \: .
\end{equation}
Here the degree of the covering is given by $d=\sum_{i=1}^{l(\mu)} \mu_i = qs$, and the Riemann-Hurwitz formula reads $2g-2+l(\mu)= mr-s$. 

The full generating function is given by
\begin{align}
& \log \Zmix(\mathbf{p};\lambda) = \sum_{g,\mu} \frac{\hmix_{g;\mu}}{m!} \lambda^{2g-2+l(\mu)} p_{\mu_1} \cdots p_{\mu_{l(\mu)}} \\ \notag
& = \la \exp\left(\sum_{i=1}^n \frac{\alpha_i p_i}{i \lambda^{i/q}}\right)\exp\left(r! [w^{r+1}] \tilde{\E}_0(w)\lambda^r \right) \exp\left(\frac{\alpha_{-q}}{q}\right) \ra^\circ \; ,
\end{align}
and the $(0,1)$-function is given by
\begin{equation}
F_{0,1}(x) = q \sum_{n=0}^\infty \frac{((nr+1)q)^{n-2}}{n!} x^{(nr+1)q}
\end{equation}

This leads to the following spectral curve:
\begin{equation}\label{eq:Smix}
S\colon \ x = y^{1/q} e^{-y^{r}},
\end{equation}
which means that
\begin{equation}
y=-\left(\frac{1}{rq}\right)^\frac{1}{r}W(-rqx^{rq})^{\frac{1}{r}},
\end{equation}
where $W$ is the standard Lambert function.

The principal specialization ($p_i\mapsto x^i$) of~$\Zmix$ is given by
\begin{equation}
\Zmix(x,\lambda) = \sum_{n=0}^\infty \frac{x^{qn}}{\lambda^nq^nn!}e^{\frac{\lambda^r}{r+1}\left(
(qn-\frac{1}{2})^{r+1}-(-\frac{1}{2})^{r+1}
\right)} ,
\end{equation}
which is annihilated by the operator
\begin{equation}
\lambda x\frac{d}{dx}-x^{q+1/2} e^{\frac{q}{r+1}\sum_{i=0}^r x^{-q} \left(\lambda x\frac{d}{dx}\right)^i x^{q} \left(\lambda x\frac{d}{dx}\right)^{r-i}} x^{-1/2}
\end{equation}

This operator dequantizes to $y- x^q\exp(qy^r)$, which is equivalent to the equation~\eqref{eq:Smix} of the spectral curve~$\Smix$ computed from the $(0,1)$-geometry. 

Furthermore, one sees immediately that under the specializations $(r,q)=(1,q)$ and $(r,1)$ we recover all the formulas we had in Sections~\ref{sec:rpart} and~\ref{sec:rspin}.

\end{document}